\newcommand{\ot}{\otimes}
\DeclareMathOperator{\HH}{HH}
\DeclareMathOperator{\HHom}{Hom}
\numberwithin{equation}{section}
\newtheorem{defi}[equation]{Definition}
\newtheorem{theo}[equation]{Theorem}
\newtheorem{lemm}[equation]{Lemma}
\newtheorem{lemma}[equation]{Lemma}
\title[Homotopy liftings and Hochschild cohomology]{Homotopy liftings and Hochschild cohomology\\
of some twisted tensor products}
\author[P.\ S.\ Ocal]{Pablo S.\ Ocal}
\address{Department of Mathematics, Texas A\&M University, 
College Station, Texas 77843, USA}
\email{pso@math.tamu.edu}
\author[T.\ Oke]{Tolulope Oke}
\address{Department of Mathematics, Texas A\&M University, 
College Station, Texas 77843, USA}
\email{toluoke@math.tamu.edu}
\author[S.\ Witherspoon]{Sarah Witherspoon}
\address{Department of Mathematics, Texas A\&M University, 
College Station, Texas 77843, USA}
\email{sjw@math.tamu.edu}
\date{January 24, 2021}
\thanks{Partially supported by NSF grants 1665286 and 2001163.}
\thanks{Partially supported by NSF grant 1440140 while the first  author was in residence at the Mathematical Sciences Research Institute in Berkeley, California, during the Spring 2020 semester.}
\keywords{Hochschild cohomology, Gerstenhaber brackets, twisted tensor products, homotopy lifting, bicharacter twisting.}
\begin{document}


\begin{abstract}
The Hochschild cohomology of a tensor product of algebras is
isomorphic to a graded tensor product of
Hochschild cohomology algebras, as a Gerstenhaber algebra.
A similar result holds when the tensor product is twisted
by a bicharacter. 
We present new proofs of these isomorphisms, using Volkov's homotopy liftings
that were introduced 
for handling Gerstenhaber brackets
expressed on arbitrary bimodule resolutions.
Our results illustrate the utility of  homotopy liftings  for theoretical purposes.
\end{abstract}

\maketitle

\section{Introduction}\label{sec:introduction}

Let $A$ and $B$ be algebras over a field $k$. Let $\HH^*$ denote Hochschild cohomology. In this paper we give a new proof of the isomorphism of Gerstenhaber algebras, 
\begin{equation*}
\HH^*(A\ot B)\cong \HH^*(A)\ot \HH^*(B)
\end{equation*}
(see Theorem~\ref{thm:G-alg-iso}). 
This isomorphism was proven by Le and Zhou \cite{LZ} using Alexander-Whitney and Eilenberg-Zilber maps on bar resolutions to handle the Gerstenhaber bracket structure. 
In fact, we give a new proof of the more general isomorphism of Gerstenhaber algebras,
\begin{equation*}
\HH^{*,F'\oplus G'}(A\ot^t B)\cong \HH^{*,F'}(A)\ot \HH^{*,G'}(B)
\end{equation*}
(see Theorem~\ref{thm:tG-alg-iso}), for a twisted tensor product algebra $A\ot^t B$ where the twisting $t$ comes from a bicharacter on grading groups (notation defined in Section~\ref{sec:twisted}). This isomorphism was proven by Grimley, Nguyen, and the third author~\cite{GNW} using twisted versions of the Alexander-Whitney and Eilenberg-Zilber maps, 
building on a result of Bergh and Oppermann~\cite{BO} about the associative algebra structure. 

For our proofs, we use Volkov's homotopy lifting technique~\cite{Volkov},
designed for use with arbitrary projective resolutions. 
In comparison with proofs already
in the literature, we do not use  bar resolutions and
thus we do not need to use the unwieldy Alexander-Whitney and Eilenberg-Zilber maps. 
These results illustrate the theoretical usefulness of homotopy liftings.
In some settings they are also computationally useful: 
see, for example, \cite{G,GNW} for some quantum complete intersections and
smash products with groups, \cite{KMOOW} for the Jordan plane, and 
\cite{NW} for polynomial rings and some types of
cyclic group algebras. 
In particular, in~\cite{GNW,KMOOW}, elementary methods allow the application of the techniques in~\cite{NW} to compute Gerstenhaber brackets on the Hochschild cohomology of 
twisted tensor products. 
This method relies on the construction of resolutions for twisted tensor product algebras given in~\cite{RTTP}. 
More specifically, the  quantum complete intersections in~\cite{GNW} are algebras twisted by bicharacters, thus providing a large class of examples to which our results
in this paper apply. 
In the last section, 
we illustrate by finding explicitly a homotopy lifting for a small such example, 
a truncated polynomial ring.

\section{Twisted tensor product and Gerstenhaber bracket techniques}\label{sec:twisted}

In this section we summarize definitions, techniques, and results from~\cite{BO,GNW,RTTP,Volkov} on twisted tensor products by a bicharacter, resolutions, and homotopy liftings. The results that we mention here were generalized in \cite{KMOOW} for a strongly graded twist using the bar and the Koszul resolutions, but we remain focused here on the special case of twistings by a bicharacter.

{\em Throughout this paper, all algebras are over a field $k$.}
The use of the tensor product $\otimes$ without any decorations usually means $\otimes_k$, but in Section \ref{sec:tp} some of the computations are carried out where we tensor over a different ring. We have included a warning before that, and we believe the context allows the reader to unequivocally determine the ring over which the tensor products happen.

We will use the Koszul sign convention. Whenever $V$, $W$, $V'$, $W'$ are graded vector spaces and $g: V\rightarrow V'$, $h: W\rightarrow W'$ are graded $k$-linear maps, we define the graded $k$-linear map $g\otimes h: V\otimes V'\rightarrow W\otimes W'$ by
\[
   (g\ot h) (v\ot w) := (-1)^{|h||v|} g(v)\ot h(w)
\]
for all homogeneous $v\in V$, $w\in W$, where $|h|$, $|v|$ denote the degrees of $h$, $v$, respectively. As a consequence, it can be immediately checked that the same sign rule applies to morphisms:
\begin{equation}\label{eqn:fcn-commutes}
   (g\ot h) (g'\ot h') = (-1)^{|h||g'|} (g g')\ot (h h').
\end{equation}

\medskip

\noindent
{\bf{Twisted tensor product by a bicharacter}}.
We now recall the definition of the twisted tensor product of algebras by a bicharacter in the sense of Bergh and Oppermann~\cite{BO}: Let $A$ and $B$ be algebras over the field $k$, graded by groups $F$ and $G$ respectively, and let $t:F\otimes_{\mathbb{Z}} G\rightarrow k^{\times}$ be a homomorphism of abelian groups, also called a \emph{twisting}. We write  $t^{\langle f| g\rangle} = t(f\otimes_{\mathbb{Z}} g)$
for all $f\in F$, $g\in G$. Let $A\otimes^t B$ denote the {\em twisted tensor product} of algebras, that is $A\ot B$ as a vector space with multiplication given by
\begin{equation*}
(a\otimes b)\cdot^t (a'\otimes b') := t^{\langle |a'|||b|\rangle}aa'\otimes bb'
\end{equation*}
for homogeneous $a,a'\in A$ and $b,b'\in B$ of degrees $|a|,|a'|\in F$ and $|b|,|b'|\in G$. We will usually write $t^{\langle a'|b\rangle}$ instead of $t^{\langle |a'|||b|\rangle}$. Observe that $A\otimes^t B$ is $(F\oplus G)$-graded.

Similarly, if $M$ is an $F$-graded $A^e$-module and $N$ is a $G$-graded $B^e$-module, denote by $M\ot^t N$ the $(A\otimes^t B)^e$-module given as a vector space by $M\otimes N$ and module structure given by
\begin{equation}\label{eqn:MoN-tmod-struc}
(a\otimes b)(m\otimes n)(a'\otimes b') := t^{\langle m|b\rangle} t^{\langle a'|n\rangle} t^{\langle a'|b\rangle} ama'\otimes bnb'
\end{equation}
for homogeneous $a,a'\in A$, $b,b'\in B$, $m\in M$, and $n\in N$. 
It can be checked that if 
$M$ and $N$ are projective modules, then $M\ot^t N$ is a $(F\oplus G)$-graded projective $(A\otimes^t B)$-module.

\quad

\noindent {\bf{Twisted tensor product of resolutions}}. 
Let $P$ be a projective resolution of $A$ as an $A^e$-module, and let $Q$ be a projective resolution of $B$ as a $B^e$-module. 
We will assume that for all $i$, $P_i$ is a finitely generated $A^e$-module and $Q_i$ is a finitely generated $B^e$-module or that at least one of $A$ or $B$
is finite dimensional as a vector space over $k$.
These hypotheses ensure that $\HHom$ and $\oplus$ may be interchanged
at a crucial step in the proof of the isomorphism. 
We will consider the projective resolution $P\ot Q$ of $A\ot B$ as an $(A\ot B)^e$-module, and more generally we will consider the projective resolution $P\ot^t Q$ of $A\ot^t B$ as an $(A\ot^t B)^e$-module. In the latter case, the authors of~\cite{GNW} constructed several isomorphisms of modules that can be seen as a chain map between two resolutions of $A\otimes^t B$ as $(A\otimes^t B)^e$-module, as showcased in Lemma~\ref{lemm:sigma-chain-iso} below. For this, they required the resolutions $P$ and $Q$ to be free as $A^e$- and $B^e$-modules respectively, as well as $P_0 = A\otimes A$ and $Q_0 = B\otimes B$, so we shall also assume these additional conditions.

\begin{lemm}\label{lemm:sigma-chain-iso} \cite[Lemma 3.2]{GNW}
There is a chain map
\begin{equation*}
\sigma:(P\otimes^t Q)\otimes_{A\otimes^t B}(P\otimes^t Q)\rightarrow (P\otimes_A P)\otimes^t (Q\otimes_B Q)
\end{equation*}
that is an isomorphism of $(A\otimes^t B)^e$-modules in each degree, given by
\begin{equation*}
\sigma((x\otimes y)\otimes (x'\otimes y')) := (-1)^{ju}t^{\langle x' | y \rangle} (x\otimes x')\otimes (y\otimes y')
\end{equation*}
on $(P_i\otimes^t Q_j)\otimes_{A\otimes^t B}(P_u\otimes^t Q_v)$.
\end{lemm}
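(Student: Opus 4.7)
The plan is to verify four properties in sequence: $\sigma$ is well-defined on the balanced tensor product, it is $(A\otimes^t B)^e$-linear, it commutes with the differentials, and it is bijective in each homological degree.

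First I would check that the formula respects the balancing over $A\otimes^t B$. For $a\in A$ and $b\in B$, applying $\sigma$ to $((x\otimes y)\cdot^t(a\otimes b))\otimes(x'\otimes y')$ and to $(x\otimes y)\otimes((a\otimes b)\cdot^t(x'\otimes y'))$, the twisted multiplication on $P\otimes^t Q$ produces scalars whose product, after using bilinearity of $t$ (so that $t^{\langle x'|yb\rangle}=t^{\langle x'|y\rangle}t^{\langle x'|b\rangle}$ and $t^{\langle ax'|y\rangle}=t^{\langle a|y\rangle}t^{\langle x'|y\rangle}$), collapses on both sides to $t^{\langle a|y\rangle}t^{\langle x'|y\rangle}t^{\langle x'|b\rangle}$; the standard balancing identities $xa\otimes_A x'=x\otimes_A ax'$ and $yb\otimes_B y'=y\otimes_B by'$ then close the argument. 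The outer $(A\otimes^t B)^e$-linearity follows in the same way, by expanding the module structure~\eqref{eqn:MoN-tmod-struc} on both sides and matching the twisting scalars via bilinearity of $t$ in each coordinate.

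Next I would verify the chain map property. The differential on $P\otimes^t Q$ is $d_P\otimes 1+(-1)^i\, 1\otimes d_Q$ on $P_i\otimes^t Q_j$, so the differential on the balanced tensor product has four summands with an overall sign $(-1)^{i+j}$ on the second factor; the differential on $(P\otimes_A P)\otimes^t(Q\otimes_B Q)$ is similarly the total differential of a tensor product of complexes. Applying $\sigma$ to each summand of $d_L((x\otimes y)\otimes(x'\otimes y'))$ and comparing with each summand of $d_R\sigma((x\otimes y)\otimes(x'\otimes y'))$, the Koszul sign $(-1)^{ju}$ shifts appropriately as $d_P$ or $d_Q$ is applied, while the twisting factor $t^{\langle x'|y\rangle}$ is unchanged because $d_P$ and $d_Q$ preserve the $F$- and $G$-gradings respectively. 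This is the main bookkeeping step and the most error-prone part of the argument, since it simultaneously mixes the Koszul sign convention~\eqref{eqn:fcn-commutes}, the sign in the total differential of a tensor product of complexes, and the bicharacter identities for $t$.

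Finally I would exhibit the inverse of $\sigma$ via the analogous formula $(x\otimes x')\otimes(y\otimes y')\mapsto(-1)^{ju}t^{-\langle x'|y\rangle}(x\otimes y)\otimes(x'\otimes y')$. Because $P$ is free over $A^e$ with $P_0=A\otimes A$ and $Q$ is free over $B^e$ with $Q_0=B\otimes B$, in each homological degree both source and target reduce to isomorphic tensor products of free modules, so the displayed formula defines a well-defined two-sided inverse, making $\sigma$ an isomorphism of $(A\otimes^t B)^e$-modules in each degree. The main obstacle is the chain map verification; once the bicharacter identities and balancing relations are in place, everything else is formal.
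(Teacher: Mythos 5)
Your outline is correct: the paper itself gives no proof of this lemma (it is quoted from \cite[Lemma 3.2]{GNW}, with only the formula for $\sigma^{-1}$ recorded afterwards), and the verification you sketch --- balancing over $A\otimes^t B$ and outer $(A\otimes^t B)^e$-linearity via bilinearity of $t$ and the module structure~\eqref{eqn:MoN-tmod-struc}, the chain-map check using that $d_P$, $d_Q$ preserve the $F$- and $G$-gradings so the factor $t^{\langle x'|y\rangle}$ is untouched while only the Koszul sign moves, and the explicit two-sided inverse --- is exactly the standard direct argument, and the scalar $t^{\langle a|y\rangle}t^{\langle x'|y\rangle}t^{\langle x'|b\rangle}$ you obtain on both sides of the balancing identity checks out. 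The only thing I would tighten is the last step: rather than appealing to freeness, it is just as quick to verify directly that the displayed formula for $\sigma^{-1}$ respects the relations $xa\otimes_A x'=x\otimes_A ax'$ and $yb\otimes_B y'=y\otimes_B by'$ (the factor $t^{-\langle ax'|y\rangle}=t^{-\langle a|y\rangle}t^{-\langle x'|y\rangle}$ absorbs exactly the scalar produced by moving $a\otimes 1$ across the balanced tensor), after which $\sigma^{-1}\sigma=\mathrm{id}$ and $\sigma\sigma^{-1}=\mathrm{id}$ are immediate on generators.
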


Clearly $\sigma^{-1}:(P\otimes_A P)\otimes^t (Q\otimes_B Q)\rightarrow (P\otimes^t Q)\otimes_{A\otimes^t B}(P\otimes^t Q)$ is given by
\begin{equation*}
\sigma^{-1}((x\otimes x')\otimes (y\otimes y')) = (-1)^{uj}t^{-\langle x' | y \rangle} (x\otimes y)\otimes (x'\otimes y')
\end{equation*}
on $(P_i\otimes_A P_u)\otimes^t(Q_j\otimes_B Q_v)$.

\quad

\noindent
{\bf{Tensor product of Gerstenhaber algebras}}.
We now recall the definition of a graded tensor product of two Gerstenhaber algebras 
from Manin~\cite[Chapter V, Proposition 9.11.1]{Manin} 
(cf.\ Le and Zhou~\cite[Remark~2.3(2) and Proposition-Definition~2.2]{LZ}, which differs by signs from that which we will use here).
Let $H_1$, $H_2$ be two Gerstenhaber algebras. Let $f,f'\in H_1$ and $g,g'\in H_2$ be homogeneous elements of degrees $m,m',n,n'$, respectively. Then the graded vector space $H_1\ot H_2$ is a Gerstenhaber algebra with product
\begin{equation}\label{eqn:cup-Galg}
   (f\ot g ) \smile (f'\ot g') := (-1)^{m'n} (f\smile f') \ot (g\smile g')
\end{equation}
and bracket
\begin{equation}\label{eqn:bracket-Galg}
{[} f\ot g , f'\ot g' {]} := (-1)^{(m'-1) n} {[}f,f'{]} \ot (g\smile g') + (-1)^{m'(n-1)} (f\smile f') \ot {[} g, g'{]} .
\end{equation}
This is exactly the definition of a graded tensor product of two Gerstenhaber--Batalin--Vilkovisky algebras (GBV algebras) given by Manin~\cite[Chapter~V Proposition~9.11.1]{Manin}. At this point we must exercise caution, since~\cite{LZ} omits the first name when referring to these algebras and calls them Batalin--Vilkovisky algebras (BV algebras), in alignment with the nomenclature by Getzler~\cite{Getzler}. Moreover, as observed in \cite[Definition~2.4]{LZ}, Batalin-Vilkovisky algebras are a special case of Gerstenhaber algebras.

\quad

\noindent
{\bf{Hochschild cohomology of a twisted tensor product}}.
Next we recall a standard isomorphism on Hochschild cohomology rings: Due to the finiteness hypothesis on $P$ and $Q$,
there is an isomorphism of vector spaces for each $m$, $n$,
\[
  \HHom_{(A\ot B)^e} (P_m\ot Q_n , A\ot B) \cong
    \HHom_{A^e}(P_m,A) \ot \HHom_{B^e}(Q_n,B) .
\]
These isomorphisms give rise to an isomorphism of complexes
\[
  \HHom_{(A\ot B)^e}(P\ot Q, A\ot B)\cong \HHom_{A^e}(P,A)\ot\HHom_{B^e}(Q,B) ,
\]
which in turn induces the standard isomorphism of 
associative algebras
\[
  \HH^*(A\ot B) \cong \HH^*(A)\ot \HH^*(B) .
\]
At this point we note that this isomorphism is in fact an isomorphism of Gerstenhaber algebras, a theorem of Le and Zhou, 
stated as Theorem~\ref{thm:G-alg-iso} below. 
More generally, as noted in~\cite[Remarks 6.4]{GNW}, when taking into account grading by groups $F,G$ and twisting by a bicharacter $t$, this isomorphism of associative algebras in fact restricts to the subalgebras
\[
  \HH^{*,F'\oplus G'}(A\ot^t B) \cong \HH^{*,F'}(A)\ot \HH^{*,G'}(B)
\]
where $F'$ and $G'$ are subgroups of $F$ and $G$, defined respectively by 
\begin{equation}\label{eqn:FprimeGprime}
F' := \bigcap_{u\in G} \text{Ker}\; t^{\langle -| u\rangle} \text{ and }  G' := \bigcap_{v\in F} \text{Ker}\; t^{\langle v| -\rangle}.
\end{equation}
The indicated second grading on Hochschild cohomology is that induced by the grading of
$A,B$, by $F,G$, respectively. 
We restrict to these subalgebras of the Hochschild cohomology algebras  
because the interchange of $\mathrm{Hom}$ and $\otimes$ does not behave well with respect to graded bimodules and degree shifts. (See~\cite[Remark 4.2 and Theorem 4.7]{BO} for details.)

Again, this isomorphism above is in fact an isomorphism of Gerstenhaber algebras, a theorem of Grimley, Nguyen, and the last author~\cite{GNW}, stated as Theorem~\ref{thm:tG-alg-iso} below.

Note that $f\in\HHom_{A^e}(P_m,A)$ is a cocycle representing a class in $\HH^{m,v}(A)$ where $v\in F'$ if for all homogeneous $x\in P_m,\;|f(x)|=|x|-v$. Similarly $g\in\HHom_{A^e}(Q_n,A)$ is a cocycle representing a class in $\HH^{n,u}(B)$ for $u\in G'$, if for all homogeneous $y\in Q_n,\;|g(y)|=|y|-u$. Hence
\begin{align}\label{eqn:t-F'-G'}
t^{\langle f(x)-x|y\rangle} = t^{\langle |f(x)|-|x|{|}y\rangle} = t^{\langle -v|y\rangle} = t^{-\langle v |y\rangle} = 1,\notag\\
t^{\langle x | g(y)-y\rangle} = t^{\langle x | |g(y)|-|y|\rangle} = t^{\langle x | -u\rangle} = t^{-\langle x | u\rangle} = 1.
\end{align}

\quad

\noindent {\bf{Homotopy lifting}\label{sec:Gbracket-techniques}}.
Next we summarize techniques from~\cite{NW,Volkov},
as reformulated in~\cite[Section 6.3]{HCSW}, 
for understanding and computing Gerstenhaber brackets on the Hochschild
cohomology ring $\HH^*(R)$ of any $k$-algebra $R$. 

The graded Lie algebra structure on the Hochschild cohomology ring $\HH^*(R)$
is defined on the bar complex, with equivalent recent definitions on
other resolutions. 
In this paper we take the formula in~(\ref{eqn:formula-bracket}) below
to be our definition of the Gerstenhaber bracket, and refer to the
cited literature for details on equivalent definitions. 

Let $P\stackrel{\mu}{\rightarrow} A$ be a projective resolution of $A$ as an $A^e$-module with differential $d$ and augmentation map $\mu:P_0\rightarrow A$. We take \textbf{d} to be the differential on the $\HHom$ complex $\HHom_{A^e}(P,P)$ defined for all $A^e$-maps $f:P\rightarrow P[-m]$ as
$$\textbf{d}(f) := df - (-1)^m fd.$$

In the following definition, the notation $\sim$ is used for two
cohomologous cocycles, that is, differing by a coboundary.

\begin{defi}\label{defi:bracket}
{\em Let $K\stackrel{\mu}{\rightarrow} R$ be a projective resolution of $R$ as an $R^e$-module, let $\Delta: K\rightarrow K\ot_R K$ be a chain map lifting the identity map on $R$, and let $f\in\HHom_{R^e}(K_m,R)$ be a cocycle. An $R^e$-module homomorphism $\psi_f:K\rightarrow K[1-m]$ is a {\em homotopy lifting} of $f$ with respect to $\Delta$ if 
\begin{align}\label{eqn:hl}
\textbf{d}(\psi_f) &= (f\ot 1 - 1\ot f)\Delta \quad \text{and} \\
\mu\psi_f &\sim (-1)^{m-1}f\psi \notag
\end{align}
for some $\psi:K\rightarrow K[1]$ for which $\textbf{d}(\psi) = (\mu\ot 1 - 1\ot \mu)\Delta.$ }
\end{defi}

We will make heavy use of the following theorem of Volkov. 

\begin{theo}\label{thm:bracket} \cite{Volkov}
Let $K$ be a projective resolution of $R$ as an $R^e$-module. 
Let $f\in {\rm{Hom}}_{R^e}(K_m,R)$ and $g\in{\rm{Hom}}_{R^e}(K_n,R)$ be cocycles on $K$, and let $\psi_f$ and $\psi_g$ be homotopy liftings of $f$ and $g$ , as in Definition~\ref{defi:bracket}. Then
\begin{equation}\label{eqn:formula-bracket}
   {[} f , g {]} := f\psi_g - (-1)^{(m-1)(n-1)} g \psi_f
\end{equation}
is a function in ${\rm{Hom}}_{R^e}(K_{m+n-1},R)$ representing the
Gerstenhaber bracket on Hochschild cohomology at the chain level. 
\end{theo}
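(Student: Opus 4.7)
The plan is to establish three things in order: (i) that the right-hand side of~(\ref{eqn:formula-bracket}) is a cocycle in $\HHom_{R^e}(K_{m+n-1},R)$, (ii) that its cohomology class is independent of the choices made (of $\psi_f$, $\psi_g$, $\Delta$, and ultimately the resolution $K$), and (iii) that on the bar resolution the formula specializes to the classical Gerstenhaber bracket. Combined with the fact that comparison chain maps induce an isomorphism on cohomology, these three steps identify~(\ref{eqn:formula-bracket}) with the Gerstenhaber bracket on $\HH^*(R)$.

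For step (i), I would compute $\textbf{d}\bigl(f\psi_g - (-1)^{(m-1)(n-1)} g\psi_f\bigr)$ directly. Since $f$ and $g$ are cocycles, only the terms $f\,\textbf{d}(\psi_g)$ and $g\,\textbf{d}(\psi_f)$ survive, up to Koszul signs that I would track from~(\ref{eqn:fcn-commutes}). Substituting the first equation of~(\ref{eqn:hl}) rewrites these as expressions of the form $f\circ(g\ot 1 - 1\ot g)\Delta$ and $g\circ(f\ot 1 - 1\ot f)\Delta$, each of which is, by construction of $\Delta$, a chain-level representative of a cup product of $f$ and $g$ on $K$. The graded commutativity of the cup product on $\HH^*(R)$ then makes the two contributions cancel modulo a $\textbf{d}$-coboundary, so the sign $(-1)^{(m-1)(n-1)}$ is precisely what is needed for cocyclicity.

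Step (ii) is the main obstacle. If $\psi_f$ and $\psi_f'$ are two homotopy liftings of $f$ relative to the same $\Delta$ and $\psi$, then $\phi := \psi_f - \psi_f'$ is an $R^e$-linear chain map $K \to K[1-m]$, and the second equation of~(\ref{eqn:hl}) forces $\mu\phi$ to be a $\textbf{d}$-coboundary in $\HHom_{R^e}(K_{m-1},R)$. The first condition alone only places $\psi_f$ in the affine space of lifts of $(f\ot 1 - 1\ot f)\Delta$, so without the second condition $g\phi$ need not vanish on cohomology; however, under the standard bijection between $\HH^{m-1}(R)$ and chain maps $K \to K[1-m]$ modulo chain homotopy, $\mu\phi$ being a coboundary is equivalent to $\phi$ being null-homotopic. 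Composing a null-homotopy for $\phi$ with the cocycle $g$ then exhibits $g\phi$ as a $\textbf{d}$-coboundary, so the change in~(\ref{eqn:formula-bracket}) vanishes in cohomology. A symmetric argument handles changes in $\psi_g$, and changes in $\Delta$ are absorbed via the fact that any two diagonal chain maps lifting the identity on $R$ are themselves chain homotopic, together with the resulting modification of $\psi$ and of the homotopy liftings.

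Finally, for step (iii), I would specialize to the bar resolution $B \to R$ with the canonical diagonal $\Delta_B$ given by splitting bar tensors and take $\psi_f$ to be the Gerstenhaber circle product $f\bar\circ(-)$, viewed as a map $B \to B[1-m]$ with its natural sign. Standard bar identities for $\bar\circ$ verify both conditions of~(\ref{eqn:hl}), and~(\ref{eqn:formula-bracket}) becomes exactly the classical formula $f \bar\circ g - (-1)^{(m-1)(n-1)} g\bar\circ f$. Transporting this equality back to $K$ via a comparison chain map $K \to B$, using the invariance established in step (ii), completes the proof.
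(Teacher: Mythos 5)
The paper does not actually prove this theorem -- its ``proof'' is a citation to \cite{Volkov} and to \cite[Section~6.3]{HCSW} -- so your outline must be judged against the argument in those sources. Your three-step architecture (cocyclicity, independence of choices, reduction to the bar resolution where the lifting is the circle operation) is the right one and matches the cited proofs. However, two of your steps contain genuine gaps as written.

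First, your justification of step (i) is a non sequitur: showing that the two contributions to $\textbf{d}\bigl(f\psi_g - (-1)^{(m-1)(n-1)}g\psi_f\bigr)$ ``cancel modulo a $\textbf{d}$-coboundary'' establishes nothing, since any value of $\textbf{d}$ is a coboundary, and a map whose coboundary is itself a coboundary need not be a cocycle. What you need, and what is true, is exact cancellation at the chain level. Using $fd=0=gd$ and the first line of~(\ref{eqn:hl}) one gets $\textbf{d}(f\psi_g)=(-1)^m f(g\ot 1-1\ot g)\Delta$ and $\textbf{d}(g\psi_f)=(-1)^n g(f\ot 1-1\ot f)\Delta$; the point is then the identities $f(g\ot 1)\Delta=(-1)^{mn}g(1\ot f)\Delta$ and $f(1\ot g)\Delta=(-1)^{mn}g(f\ot 1)\Delta$, which hold \emph{on the nose} (both sides only see the $K_n\ot_R K_m$, respectively $K_m\ot_R K_n$, component of $\Delta$, where they agree by one-sided $R$-linearity of $f$ and $g$ together with the Koszul sign rule), not merely up to coboundary via graded commutativity in cohomology. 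These give $\textbf{d}(f\psi_g)=(-1)^{(m-1)(n-1)}\textbf{d}(g\psi_f)$ exactly, hence cocyclicity. Second, the final transport from the bar resolution $B$ back to $K$ is not covered by your step (ii), which only treats changes of $\psi_f$, $\psi_g$, $\Delta$ over a \emph{fixed} resolution. You need the additional lemma that homotopy liftings are compatible with comparison maps: if $\iota\colon K\to B$ and $\pi\colon B\to K$ lift $\mathrm{id}_R$ and $\psi^B$ is a homotopy lifting of $f\pi$ on $B$ with respect to $\Delta_B$, then $\pi\psi^B\iota$ is a homotopy lifting of $f$ on $K$ with respect to the diagonal $(\pi\ot\pi)\Delta_B\iota$ (including the second condition of~(\ref{eqn:hl})); only then does your step (ii) identify the bracket computed on $K$ with the classical one computed on $B$. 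Relatedly, in step (ii) you should also allow the two liftings of $f$ to be taken with respect to different maps $\psi$ and $\psi'$: since $\psi-\psi'$ is a chain map $K\to K[1]$ it is null-homotopic, so $f(\psi-\psi')$ is a coboundary and the argument survives, but this case is not excluded by Definition~\ref{defi:bracket} and must be addressed.
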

\begin{proof} See~\cite{Volkov} or~\cite[Section 6.3]{HCSW}.
\end{proof}

\section{Isomorphisms of Gerstenhaber algebras}\label{sec:tp}

In this section, we give a new proof of a result of Grimley, Nguyen, and the third author~\cite{GNW}: A particular subalgebra of the bigraded Hochschild cohomology of a twisted tensor product by a bicharacter $A\ot^t B$ of algebras $A$ and $B$ (under a finiteness condition) is isomorphic, as a Gerstenhaber algebra, to a subalgebra of the graded tensor product of the bigraded Hochschild cohomology algebras of $A$ and $B$, with bracket given by formula~(\ref{eqn:bracket-Galg}). The proof in~\cite{GNW} used bar resolutions combined with twisted versions of the Alexander-Whitney and Eilenberg-Zilber maps. Here we dispense with bar resolutions altogether and give a direct proof independent of choices of projective resolutions, illustrating the theoretical value of Volkov's homotopy lifting method~\cite{Volkov}. 

Our proof will immediately translate to the case without the bigrading, showing that the bracket given by formula~(\ref{eqn:bracket-Galg}) on the graded tensor product $\HH^*(A)\ot \HH^*(B)$ of the two Gerstenhaber algebras $\HH^*(A)$ and $\HH^*(B)$ corresponds to the Gerstenhaber bracket on $\HH^*(A\ot B)$. Although this is a special case of the general theorem by taking the grading or the twisting to be trivial, it is enlightening to see how the proof does not rely on the particulars of the grading nor the twisting, which suggests that it could be improved to the setting discussed in~\cite{KMOOW}.

We will use Volkov's theory of homotopy liftings~\cite{Volkov} as summarized in Section~\ref{sec:Gbracket-techniques}. We will first find the homotopy liftings needed for the proof, in Lemma~\ref{lem:twist-fotg-hl} below. Let $\Delta_P: P\rightarrow P\ot_A P$ and $\Delta_Q:Q\rightarrow Q\ot_B Q$ be diagonal maps induced by chain maps $\tilde{\Delta}_P:P\rightarrow P\ot P$ and $\tilde{\Delta}_Q:Q\rightarrow Q\ot Q$ lifting the multiplication maps $A\ot A\rightarrow A$ and $B\ot B\rightarrow B$, respectively. Let $$\Delta_{P\ot^t Q} := \sigma^{-1}(\Delta_P \ot^t \Delta_Q) ,$$ where $\sigma$ is the isomorphism given in Lemma~\ref{lemm:sigma-chain-iso}, and so $\Delta_{P\ot^t Q}$ is by construction a diagonal map from $P\ot^t Q$ to $(P\ot^t Q)\ot_{A\ot^t B}(P\ot^t Q)$. Moreover $\Delta_{P\ot^t Q}$ is clearly induced by the chain map $$\tilde{\Delta}_{P\ot^t Q} := {\widetilde{\sigma}}^{-1}(\tilde{\Delta}_P \ot \tilde{\Delta}_Q)$$ 
where $\widetilde{\sigma}^{-1}: (P\ot P) \ot (Q\ot Q) \rightarrow
(P\ot Q) \ot (P\ot Q)$ is defined by the ``same'' formula as $\sigma^{-1}$ (after the statement of Lemma~\ref{lemm:sigma-chain-iso}). 

Let $f\in\HHom_{A^e}(P_m,A)$, $f'\in\HHom_{A^e}(P_{m'},A)$, $g\in\HHom_{B^e}(Q_n,B)$, $g'\in\HHom_{B^e}(Q_{n'}, B)$ be cocycles representing elements of $H_1:= \HH^{*,F'}(A)$ and $H_2:= \HH^{*,G'}(B)$. Denote by $f\ot^t g$ the element of $\HHom_{(A\ot^t B)^e}(P_m\ot^t Q_n, A\ot^t B)$ defined by $(f\ot^t g)(x\ot y) = (-1)^{mn} t^{-\langle x | g\rangle} f(x)\ot g(y)$ for all $x\in P_m$, $y\in Q_n$, and similarly $f'\ot^t g'$. It can be checked that $f\ot^t g$, $f'\ot^t g'$ are indeed $(A\ot^t B)^e$-module homomorphisms due to the definitions of the subgroups $F',G'$ of $F,G$. Furthermore, $f\ot^t g$ and $f'\ot^t g'$ are cocycles due to the definition of the differential on the tensor product of complexes.

\begin{lemma}\label{lem:twist-fotg-hl}
Let $\psi_f,\psi_g$ be homotopy liftings of $f,g$ with respect to $\Delta_P$, $\Delta_Q$, respectively, and define
\[
   \psi_{f\ot^t g}:= \psi_f \ot (1\ot g)\Delta_Q + (-1)^m (f\ot 1)
    \Delta_P \ot \psi_g 
\]
as an element of $\HHom_{(A\ot^t B)^e}(P\ot ^t Q, P\ot^t Q[1-m-n])$.
Then $\psi_{f\ot^t g}$ is a homotopy lifting of $f\ot^t g$
with respect to $\Delta_{P\ot^t Q}$.
\end{lemma}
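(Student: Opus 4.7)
The plan is to verify the two conditions of Definition~\ref{defi:bracket} for the candidate $\psi_{f\ot^t g}$ with respect to the diagonal map $\Delta_{P\ot^t Q} = \sigma^{-1}(\Delta_P \ot^t \Delta_Q)$. Since this diagonal decomposes along the two tensor factors, the strategy is to leverage the known homotopy lifting relations for $\psi_f$ (with respect to $\Delta_P$) and $\psi_g$ (with respect to $\Delta_Q$) separately, and then recombine them via the Koszul sign rule~(\ref{eqn:fcn-commutes}) and the explicit formula for $\sigma^{-1}$.

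For the first condition, I would apply $\textbf{d}$ to each summand of $\psi_{f\ot^t g}$ using the Leibniz rule. In the summand $\psi_f \ot (1\ot g)\Delta_Q$, the factor $(1\ot g)\Delta_Q$ is a chain map (since $g$ is a cocycle and $\Delta_Q$ is a chain map), so only $\textbf{d}(\psi_f) = (f\ot 1 - 1\ot f)\Delta_P$ contributes on the left factor, up to a Koszul sign. A symmetric computation for the summand $(-1)^m (f\ot 1)\Delta_P \ot \psi_g$ uses $\textbf{d}(\psi_g) = (g\ot 1 - 1\ot g)\Delta_Q$. On the other hand, expanding $((f\ot^t g)\ot 1 - 1\ot (f\ot^t g))\sigma^{-1}(\Delta_P \ot^t \Delta_Q)$ using the explicit formula for $\sigma^{-1}$ and the definition $(f\ot^t g)(x\ot y) = (-1)^{mn} t^{-\langle x|g\rangle} f(x)\ot g(y)$ also produces four terms, each carrying twisting factors of the form $t^{-\langle x'|y\rangle}$ and sign factors $(-1)^{ju}$. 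Matching the two sides term-by-term, using~(\ref{eqn:t-F'-G'}) to simplify twisting factors involving $f(x)-x$ and $g(y)-y$ to $1$, should yield the first required equality.

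For the second condition, I would construct a suitable $\psi$ for $P\ot^t Q$ by the same tensor pattern. Namely, let $\psi_A, \psi_B$ be chain maps satisfying $\textbf{d}(\psi_A) = (\mu_A\ot 1 - 1\ot \mu_A)\Delta_P$ and $\textbf{d}(\psi_B) = (\mu_B\ot 1 - 1\ot \mu_B)\Delta_Q$, and set
\[
\psi := \psi_A \ot (1\ot \mu_B)\Delta_Q + (\mu_A\ot 1)\Delta_P \ot \psi_B.
\]
Repeating the first computation with $f,g$ replaced by $\mu_A,\mu_B$ (and degrees $m=n=0$) shows $\textbf{d}(\psi) = (\mu\ot 1 - 1\ot \mu)\Delta_{P\ot^t Q}$, where $\mu$ is the augmentation of $P\ot^t Q$. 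Then expanding both $\mu\psi_{f\ot^t g}$ and $(-1)^{m+n-1}(f\ot^t g)\psi$ into two summands each and applying the known relations $\mu_A\psi_f \sim (-1)^{m-1}f\psi_A$ and $\mu_B\psi_g \sim (-1)^{n-1}g\psi_B$ factor by factor should establish the required cohomology relation.

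The main obstacle will be precise bookkeeping of the three sources of signs and scalars appearing simultaneously: the Koszul signs produced by~(\ref{eqn:fcn-commutes}) each time two graded maps are composed on tensor products, the extra $(-1)^{ju}$ factor in $\sigma^{-1}$ depending on the homological degrees of the two arguments, and the twisting factors $t^{\pm\langle\cdot|\cdot\rangle}$ coming from both $\sigma^{-1}$ and the definition of $f\ot^t g$. The restriction that $f,g$ represent classes in $\HH^{*,F'}(A)$ and $\HH^{*,G'}(B)$ enters essentially through~(\ref{eqn:t-F'-G'}), which allows one to replace twisting factors involving $f(x)$ or $g(y)$ with those involving only $x$ or $y$; without it the matching of twisting factors in the two sides of each equation would break down.
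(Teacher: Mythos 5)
Your proposal takes essentially the same route as the paper: for the first condition you apply the Leibniz rule to the two summands of $\psi_{f\ot^t g}$, use that $(f\ot 1)\Delta_P$ and $(1\ot g)\Delta_Q$ commute with the differentials up to sign so that only $\textbf{d}(\psi_f)$ and $\textbf{d}(\psi_g)$ survive, and separately expand $((f\ot^t g)\ot 1 - 1\ot (f\ot^t g))\sigma^{-1}(\Delta_P\ot^t\Delta_Q)$ so that the twisting factors cancel via~(\ref{eqn:t-F'-G'}); this is exactly the paper's argument, including your correct identification of~(\ref{eqn:t-F'-G'}) as the essential use of the restriction to $F'$ and $G'$. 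The one point where you deviate is the choice of $\psi$ for the second condition. The paper takes $\psi_{P\ot^t Q}=\psi_P\ot(\mu_Q\ot 1)\Delta_Q+(1\ot\mu_P)\Delta_P\ot\psi_Q$, with the augmentations placed precisely so that the exact (not merely up-to-coboundary) identities $\mu_Q(1\ot g)\Delta_Q=g(\mu_Q\ot 1)\Delta_Q$ and $\mu_P(f\ot 1)\Delta_P=f(1\ot\mu_P)\Delta_P$ turn $\mu\psi_{f\ot^t g}$ into $(-1)^{m+n-1}(f\ot^t g)\psi_{P\ot^t Q}$ after the two allowed $\sim$ substitutions. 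Your $\psi=\psi_A\ot(1\ot\mu_B)\Delta_Q+(\mu_A\ot 1)\Delta_P\ot\psi_B$ still satisfies $\textbf{d}(\psi)=(\mu\ot 1-1\ot\mu)\Delta_{P\ot^t Q}$ (the cross terms telescope either way), but the final comparison then produces $g(\mu_B\ot 1)\Delta_Q$ where you need $g(1\ot\mu_B)\Delta_Q$ (and dually for $f$); these differ by $\pm\delta(g\psi_B)$, and since the partner factor $f\psi_A$ is not a cocycle you cannot simply substitute ``factor by factor'' as you suggest --- you would need an extra argument collecting the error terms into $\delta(f\psi_A\ot g\psi_B)$. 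This is repairable, but the cleaner move is to place the $\mu$'s as the paper does, which makes the last step an equality on the nose.
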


Note that we are using a slight abuse of notation: In the definition of $\psi_{f\ot^t g}$ the first tensor symbol should be $\ot^t$, the second should be $\ot_B$, the third should be $\ot_A$, and the fourth should be $\ot^t$. However this should be clear from the domains and codomains of the maps used. When it does not cause any confusion, this abuse of notation will carry on in the proofs.

\begin{proof}
First note that $\psi_{f\ot^t g}$ is indeed an $(A\ot^t B)^e$-module
homomorphism as claimed.

We will next show that $\psi_{f\ot^t g}$ satisfies 
equation~(\ref{eqn:hl}) with $f$ replaced by $f\ot^t g$, that is,
we will show that
\begin{equation}\label{eqn:twist-fotg}
   d \psi_{f\ot^t g} - (-1)^{m+n-1} \psi_{f\ot^t g} d = 
    ((f\ot^t g)\ot 1_{P\ot^t Q} - 1_{P\ot^t Q}\ot (f\ot^t g)) \Delta_{P\ot^t Q} .
\end{equation} 
Applying the definition of $\psi_{f\ot^t g}$ given in the
statement of the lemma, the left side of equation~(\ref{eqn:twist-fotg}) is 
\[
\begin{aligned}
 & d (\psi_f\ot (1\ot g)\Delta_Q) + (-1)^m d ((f\ot 1)\Delta_P\ot \psi_g) \\
&  \quad +(-1)^{m+n} (\psi_f \ot (1\ot g)\Delta_Q ) d 
   + (-1)^n ((f\ot 1)\Delta_P\ot \psi_g)d \\
& = d\psi_f \ot (1\ot g)\Delta_Q + (-1)^{m-1} \psi_f \ot d(1\ot g)\Delta_Q
   + (-1)^m d (f\ot 1)\Delta_P \ot \psi_g\\
& \quad + (f\ot 1)\Delta_P\ot d\psi_g 
   +(-1)^m \psi_f d\ot (1\ot g)\Delta_Q + (-1)^{m+n} \psi_f \ot (1\ot g)
  \Delta_Q d \\
& \quad - (f\ot 1)\Delta_P d \ot \psi_g
   + (-1)^{n} (f\ot 1)\Delta_P\ot\psi_g d .
\end{aligned}
\]
The maps 
$(f\ot 1)\Delta_P$ and $(1\ot g)\Delta_Q$ 
commute with the differentials $d$ up to multiplication by
$(-1)^m$ and $(-1)^n$, respectively.
Thus four of the above terms cancel, leaving
\[
 (d\psi_f - (-1)^{m-1} \psi_f d)\ot (1\ot g)\Delta_Q
   + (f\ot 1)\Delta_P \ot (d\psi_g - (-1)^{n-1}\psi_g d) ,
\]
in which we use equation~(\ref{eqn:hl}) for $f$ and for $g$ to obtain
\[
   (f\ot 1 - 1\ot f)\Delta_P\ot (1\ot g)\Delta_Q
   + (f\ot 1) \Delta_P \ot (g\ot 1-1\ot g)\Delta_Q .
\] 
The right hand side of equation~(\ref{eqn:twist-fotg}) acts on a general element of $P\otimes^t Q$, and applying $\Delta_P \ot^t \Delta_Q$ gives a finite sum of elements of the form $(x\otimes x')\otimes (y\otimes y')\in (P\otimes_A P)\otimes^t (Q\otimes_B Q)$. We can then compute this right hand side as
\begin{align*}
 &((f\ot^t g)\ot 1_{P\ot^t Q} - 1_{P\ot^t Q}\ot (f\ot^t g))\sigma^{-1}((x\otimes x') \otimes (y\otimes y'))\\
 &\quad = ((f\ot^t g)\ot 1_{P\ot^t Q} - 1_{P\ot^t Q}\ot (f\ot^t g))(-1)^{|x'||y|} t^{-\langle x' | y \rangle}(x\otimes y \otimes x'\otimes y')\\
 &\quad = (-1)^{|x'||y|} t^{-\langle x' | y \rangle} (-1)^{n|x|}f(x)\ot g(y)\ot x'\ot y'\\
 &\quad - (-1)^{|x'||y|} t^{-\langle x' | y \rangle} (-1)^{n|x|}(-1)^{n|y|}(-1)^{n|x'|}(-1)^{m|x|}(-1)^{m|y|} x\ot y \ot f(x')\ot g(y') .
\end{align*}
This can only be nonzero when applied to elements whose degrees coincide with the degrees of $f$ and $g$, that is, we can assume that $m = |x| = |x'|$ and $n = |y| = |y'|$. Hence the computation simplifies to
\begin{equation*}
t^{-\langle x' | y \rangle} f(x)\ot g(y)\ot x'\ot y' - t^{-\langle x' | y \rangle} (-1)^{n|y|}(-1)^{m|x|} x\ot y \ot f(x')\ot g(y') .
\end{equation*}
Projecting onto $P\ot^t Q$ (that is, applying the module action \eqref{eqn:MoN-tmod-struc}) yields
\begin{align}\label{eqn:induced-projection}
 &t^{-\langle x' | y \rangle} t^{\langle x' | g(y)\rangle}f(x)x'\ot g(y)y' - t^{-\langle x' | y \rangle} t^{\langle f(x') | y \rangle} (-1)^{n|y|+m|x|}xf(x')\ot yg(y')\nonumber\\
 &\quad = t^{\langle x' | g(y)-y\rangle}f(x)x'\ot g(y)y' - t^{\langle f(x')-x' | y \rangle} (-1)^{n|y|+m|x|}xf(x')\ot yg(y')\nonumber\\
 &\quad = f(x)x'\ot g(y)y' - (-1)^{n|y|+m|x|}xf(x')\ot yg(y') ,
\end{align}
where the last equality holds because $f$ and $g$ represent cocycles in $\HH^{*,F'}(A)$ and $\HH^{*,G'}(B)$ respectively, so equalities \eqref{eqn:t-F'-G'} apply.

Consider now the behavior of $((f\ot 1)\ot^t (g\ot 1) - (1\ot f)\ot^t (1\ot g))$ applied to an element of the form $(x\otimes x')\otimes (y\otimes y')\in (P\otimes_A P)\otimes^t (Q\otimes_B Q)$. We obtain
\begin{align*}
 &((f\ot 1)\ot^t (g\ot 1) - (1\ot f)\ot^t (1\ot g))(x\otimes x' \otimes y\otimes y')\\
 &\quad = (-1)^{n|x|} (-1)^{n|x'|} f(x)\ot x'\ot g(y)\ot y'\\
 &\quad - (-1)^{m|x|}(-1)^{n|x|}(-1)^{n|x'|}(-1)^{n|y|} x\ot f(x')\ot y \ot g(y')\\
 &\quad = f(x)\ot x'\ot g(y)\ot y' - (-1)^{n|y|+m|x|} x\ot f(x')\ot y \ot g(y')
\end{align*}
where we have again assumed that $m = |x| = |x'|$ and $n = |y| = |y'|$ for the last equality. Projecting onto $P\ot^t Q$ (that is, applying the module action of $A$ on $P$ and $B$ on $Q$) yields $f(x)x'\ot g(y)y' - (-1)^{n|y|+m|x|}xf(x')\ot yg(y')$, which is exactly what we obtained in \eqref{eqn:induced-projection}. This means that $((f\ot^t g)\ot 1_{P\ot^t Q} - 1_{P\ot^t Q}\ot (f\ot^t g))\sigma^{-1}(\Delta_P \ot^t \Delta_Q)$ and $((f\ot 1)\ot^t (g\ot 1) - (1\ot f)\ot^t (1\ot g))(\Delta_P \ot^t \Delta_Q)$ yield the same map after projecting onto $P\ot^t Q$. Since these canonical projections that we use are isomorphisms, we can safely work as if they were equal. In particular since by Lemma~\ref{lemm:sigma-chain-iso} the map $\mu_{P\ot^t Q}$ can be identified with $\mu_P\ot^t \mu_Q$, the above argument shows that $(\mu_P\ot 1\ot\mu_Q\ot 1 - 1\ot\mu_P\ot 1 \ot\mu_Q)(\Delta_P\ot \Delta_Q)$ and $(\mu_{P\ot Q}\ot 1_{P\ot Q} - 1_{P\ot Q} \ot \mu_{P\ot Q})(\Delta_P\ot \Delta_Q)$ can be regarded as equal, a fact we will use in \eqref{eqn:mu-application} below. We now have
\[
\begin{aligned}
& ((f\ot 1)\ot^t (g\ot 1) - (1\ot f)\ot^t (1\ot g))(\Delta_P \ot^t \Delta_Q)\\
&= (f\ot 1\ot g\ot 1 -f\ot 1\ot 1\ot g + f\ot 1\ot 1\ot g 
      - 1\ot f\ot 1\ot g)
    (\Delta_P \ot \Delta_Q)\\
& = ((f\ot 1)\ot (g\ot 1-1\ot g))(\Delta_P \ot \Delta_Q)
  + ((f\ot 1 - 1\ot f)\ot (1\ot g))(\Delta_P \ot \Delta_Q) ,
\end{aligned}
\]
which agrees with what we calculated above for the left
side of equation~(\ref{eqn:twist-fotg}).

Next we take $\psi_P: P\rightarrow P[1]$ and $\psi_Q:Q\rightarrow Q[1]$
to be maps for which $d\psi_P + \psi_P d = (\mu_P\ot 1 - 1\ot \mu_P)\Delta_P$,
$d\psi_Q + \psi_Q d = (\mu_Q\ot 1 -1\ot \mu_Q)\Delta_Q$,
$\mu_P\psi_f \sim (-1)^{m-1} f \psi_P$, and $\mu_Q\psi_g\sim (-1)^{n-1}f\psi_Q$. 
Set
\[
    \psi_{P\ot^t Q} := \psi_P\ot (\mu_Q\ot 1)\Delta_Q + (1\ot\mu_P)\Delta_P\ot\psi_Q.
\]
Then, since $(\mu_Q\ot 1)\Delta_Q$ and $(1\ot\mu_P)\Delta_P$
are chain maps, and noting $|\mu_Q| = 0 = |\mu_P|$,
\begin{align}\label{eqn:mu-application}
& d\psi_{P\ot^t Q} + \psi_{P\ot^t Q}d = d(\psi_P\ot (\mu_Q\ot 1)\Delta_Q + (1\ot\mu_P)\Delta_P\ot \psi_Q)\nonumber\\
 & \hspace{.1cm}+ (\psi_P\ot (\mu_Q\ot 1) \Delta_Q + (1\ot \mu_P)\Delta_P \ot \psi_Q)d\nonumber\\
 &= d\psi_P\ot (\mu_Q\ot 1)\Delta_Q - \psi_P\ot d(\mu_Q\ot 1)\Delta_Q
 + d(1\ot \mu_P)\Delta_P\ot\psi_Q \nonumber\\
 & \hspace{.1cm}+ (1\ot\mu_P)\Delta_P\ot d\psi_Q + \psi_P d \ot (\mu_Q\ot 1)\Delta_Q + \psi_P\ot (\mu_Q\ot 1) \Delta_Q d \nonumber \\
 & \hspace{.1cm}-(1\ot\mu_P)\Delta_P d \ot \psi_Q + (1\ot\mu_P)\Delta_P\ot \psi_Q d\nonumber\\
 &= (d\psi_P + \psi_Pd) \ot (\mu_Q\ot 1)\Delta_Q
   + (1\ot\mu_P)\Delta_P \ot (d\psi_Q + \psi_Q d)\nonumber\\
 &= (\mu_P\ot 1 - 1\ot\mu_P)\Delta_P \ot (\mu_Q\ot 1) \Delta_Q 
   + (1\ot \mu_P)\Delta_P \ot (\mu_Q\ot 1 - 1\ot\mu_Q)\Delta_Q\nonumber\\
 &= (\mu_P\ot 1 \ot\mu_Q\ot 1 -1\ot\mu_P\ot\mu_Q\ot 1\nonumber\\
 &\hspace{.1cm}+ 1\ot \mu_P\ot \mu_Q \ot 1 - 1\ot \mu_P\ot 1 \ot\mu_Q) (\Delta_P
  \ot \Delta_Q)\nonumber\\
&= (\mu_P\ot 1\ot\mu_Q\ot 1 - 1\ot\mu_P\ot 1 \ot\mu_Q)(\Delta_P\ot \Delta_Q)\\
&= (\mu_{P\ot Q}\ot 1_{P\ot Q} - 1_{P\ot Q} \ot \mu_{P\ot Q})\Delta_{P\ot^t Q}\nonumber\\
&= (\mu_{P\ot^t Q}\ot 1_{P\ot^t Q} - 1_{P\ot^t Q} \ot \mu_{P\ot^t Q})\Delta_{P\ot^t Q} , \nonumber
\end{align}
where the second to last equality has already been discussed. Finally we check:
\begin{align*}
  \mu_{P\ot^t Q} \psi_{f\ot^t g} &= (\mu_P\ot\mu_Q)
   (\psi_f\ot (1\ot g)\Delta_Q + (-1)^m (f\ot 1)\Delta_P\ot \psi_g) \\
  &= \mu_P\psi_f \ot\mu_Q (1\ot g)\Delta_Q + (-1)^m \mu_P (f\ot 1)
   \Delta_P \ot\mu_Q \psi_g \\
  &\sim (-1)^{m-1} f\psi_P \ot \mu_Q (1\ot g)\Delta_Q 
  + (-1)^{m+n-1} \mu_P (f\ot 1) \Delta_P \ot g\psi_Q\\
  &= (-1)^{m-1} f\psi_P\ot g (\mu_Q\ot 1)\Delta_Q + (-1)^{m+n-1}
   f(1\ot\mu_P) \Delta_P\ot g\psi_Q\\
  &= (f\ot g) ( (-1)^{m+n-1} \psi_P\ot (\mu_Q\ot 1)\Delta_Q 
   + (-1)^{m+n-1} (1\ot \mu_P) \Delta_P\ot \psi_Q) \\
  &= (-1)^{m+n-1}(f\ot g) (\psi_P\ot (\mu_Q\ot 1)\Delta_Q
    + (1\ot \mu_P)\Delta_P\ot \psi_Q)\\
  &= (-1)^{m+n-1} (f\ot g)\psi_{P\ot^t Q} .
\end{align*}
We have again identified $\mu_{P\ot^t Q}$ with $\mu_P\ot^t \mu_Q$, and used both $\mu_Q(1\otimes g)\Delta_Q = g(1\otimes \mu_Q)\Delta_Q$ and $\mu_P(f\otimes 1)\Delta_P = f(1\otimes \mu_P)\Delta_Q$. Here we justify the first of these equalities, while the second can be checked in an analogous way: These maps act on a general element of $Q$, where applying $\Delta_Q$ gives a finite sum of elements of the form $y\otimes y'\in Q\otimes_B Q$, and now
\begin{align*}
  \mu_Q(1\otimes g)(y\otimes y') &= (-1)^{n|y|}\mu_Q(y\otimes g(y')) = (-1)^{n|y|}\mu_Q(yg(y')\otimes 1)\\
  &= (-1)^{n|y|}\mu_Q(yg(y')) = (-1)^{n|y|}\mu_Q(y)g(y') = g(\mu_Q(y)y')\\
  &= g(1\otimes\mu_Q(y)y') = g(\mu_Q(y)\otimes y') = g(\mu_Q\otimes 1)(y\otimes y') , 
\end{align*}
since the canonical projections are isomorphisms.

In summary, we have shown that
\[
   \mu_{P\ot^t Q} \psi_{f\ot^t g} \sim (-1)^{m+n-1} (f\ot^t g)
   \psi_{P\ot^t Q}
\]
and $d\psi_{P\ot^t Q} + \psi_{P\ot^t Q}d = (\mu_{P\ot^t Q}\ot 1_{P\ot Q} - 1_{P\ot Q}\ot\mu_{P\ot^t Q})\Delta_{P\ot^t Q}$. Therefore $\psi_{f\ot^t g}$ is a homotopy lifting for $f\ot^t g$ with respect to $\Delta_{P\ot^t Q}$.
\end{proof}

Next we state and give the promised new proof of the 
isomorphism of Gerstenhaber algebras.

\begin{theo}[Grimley--Nguyen--Witherspoon~\cite{GNW}]\label{thm:tG-alg-iso}
Let $A$ and $B$ be algebras over a field $k$, graded by groups $F$ and $G$ respectively. Assume that there exist projective resolutions of $A$ as an $A^e$-module and of $B$ as a $B^e$-module consisting of finitely generated modules
or that at least one of $A$ or $B$ is finite dimensional as a vector space over $k$.  
Then there is an isomorphism of Gerstenhaber algebras
\[
   \HH^{*,F'\oplus G'}(A\ot^t B) \cong \HH^{*,F'}(A)\ot \HH^{*,G'}(B) ,
\]
where $F'$ and $G'$ are the subgroups of $F$ and $G$ defined in~\eqref{eqn:FprimeGprime} 
and the algebra on the right side is a graded tensor product 
with product and bracket given by formulas~(\ref{eqn:cup-Galg})
and~(\ref{eqn:bracket-Galg}).
\end{theo}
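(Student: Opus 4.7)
My plan is to use Volkov's Theorem~\ref{thm:bracket} to compute the Gerstenhaber bracket on $\HH^{*,F'\oplus G'}(A\ot^t B)$ at the chain level using the homotopy lifting $\psi_{f\ot^t g}$ explicitly constructed in Lemma~\ref{lem:twist-fotg-hl}, and then match the resulting formula with the bracket~(\ref{eqn:bracket-Galg}) on $\HH^{*,F'}(A)\ot\HH^{*,G'}(B)$. The underlying isomorphism of associative algebras is already well understood, coming from the interchange of $\HHom$ and $\ot$ discussed just before the theorem statement, and matching the cocycle $f\ot^t g$ with $f\ot g$; so only the bracket structure requires new work.

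Given cocycles $f\in\HHom_{A^e}(P_m,A)$, $f'\in\HHom_{A^e}(P_{m'},A)$, $g\in\HHom_{B^e}(Q_n,B)$, $g'\in\HHom_{B^e}(Q_{n'},B)$ representing classes in $\HH^{*,F'}(A)$ and $\HH^{*,G'}(B)$, I substitute the homotopy liftings $\psi_{f\ot^t g}$ and $\psi_{f'\ot^t g'}$ from Lemma~\ref{lem:twist-fotg-hl} into Volkov's formula~(\ref{eqn:formula-bracket}), producing
\[
   [f\ot^t g,\, f'\ot^t g'] = (f\ot^t g)\psi_{f'\ot^t g'} - (-1)^{(m+n-1)(m'+n'-1)}(f'\ot^t g')\psi_{f\ot^t g} .
\]
Expanding each $\psi$ via its two-summand definition yields four composite terms, which I evaluate one by one.

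Two crucial simplifications appear during this evaluation. First, the twisting factors $t^{-\langle x\mid g\rangle}$ in the definition of $f\ot^t g$ trivialize, exactly as in~\eqref{eqn:t-F'-G'}, because $f$ and $g$ have degrees in $F'$ and $G'$ respectively, so that at the level of these cocycles the twisted tensor product of maps behaves like the untwisted one. Second, the compositions $f\circ(1\ot f')\Delta_P$ and $g\circ(1\ot g')\Delta_Q$ are precisely the chain-level cup products $f\smile f'$ and $g\smile g'$, while $f\psi_{f'}$ and $g\psi_{g'}$ are the ingredients of the chain-level brackets $[f,f']$ and $[g,g']$ appearing in Volkov's formula. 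Applying the Koszul sign rule~(\ref{eqn:fcn-commutes}) to each composition of tensor products produces explicit signs that must be compared with those in~(\ref{eqn:bracket-Galg}).

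The main obstacle will be the sign bookkeeping, together with the reconciliation of the two ``crossed'' terms $(f'\psi_f)\ot(g'\smile g)$ and $(f'\smile f)\ot(g'\psi_g)$ that arise from the second half of Volkov's formula. These involve cup products in the opposite order from those in the target expression, so I expect to invoke graded commutativity of the cup product on cohomology to replace $g'\smile g$ by $(-1)^{nn'}g\smile g'$ and $f'\smile f$ by $(-1)^{mm'}f\smile f'$ modulo coboundaries. A careful parity check should then show that the signs from Volkov's formula, the Koszul signs from the tensor product compositions, and these graded-commutativity signs conspire to yield exactly the coefficients $(-1)^{(m'-1)n}$ and $(-1)^{m'(n-1)}$ in~(\ref{eqn:bracket-Galg}), completing the identification of brackets and hence of the full Gerstenhaber algebra structure.
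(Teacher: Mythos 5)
Your proposal is correct and follows essentially the same route as the paper's proof: substitute the homotopy liftings of Lemma~\ref{lem:twist-fotg-hl} into Volkov's formula~(\ref{eqn:formula-bracket}), expand into four terms, identify the cup-product and bracket ingredients via the Koszul sign rule~(\ref{eqn:fcn-commutes}), and use graded commutativity of the cup product to reconcile the two crossed terms with the signs in~(\ref{eqn:bracket-Galg}). The only remaining work is the sign bookkeeping you flag, which the paper carries out exactly as you describe.
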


\begin{proof}
Let $f, f'\in\HH^{*,F'}(A)$ and $g, g'\in\HH^{*,G'}(B)$, be cocycles. By definition~(\ref{eqn:bracket-Galg}) and under the isomorphism of graded vector spaces given by sending $f\ot^t g$ to $f\ot g$, the bracket  ${[}f\ot^t g, f'\ot^t g'{]}$ on the graded tensor product  $\HH^{*,F'}(A)\ot \HH^{*,G'}(B)$ corresponds to
\begin{equation}\label{eqn:Ga}
[f\ot^t g,f'\ot^t g'] = (-1)^{(m'-1)n} {[}f,f'{]}\ot^t (g\smile g') + (-1)^{m' (n-1)} (f\smile f')\ot^t {[}g,g'{]}.
\end{equation}
We may take $f\smile f' = (f'\ot f)\Delta_P$ and $g\smile g'=(g'\ot g)\Delta_Q$, and we may take $[f,f']$ and $[g,g']$ to be given in terms of homotopy liftings by formula~(\ref{eqn:formula-bracket}). Then the right side of~(\ref{eqn:Ga}) is equal to
\begin{align*}
&(-1)^{(m'-1)n} (f\psi_{f'} - (-1)^{(m-1)(m'-1)} f'\psi_f ) \ot^t (g\ot g') \Delta_Q \\
& + (-1)^{m' (n-1)} (f\ot f')\Delta_P \ot^t (g\psi_{g'} - 
   (-1)^{(n-1)(n'-1)} g' \psi_g ).
\end{align*}

By Lemma~\ref{lem:twist-fotg-hl}, a homotopy lifting map for $f\ot^t g$ is
\[  \psi_{f\ot^t g} :=  \psi_{f}\ot (1\ot g) \Delta_Q + (-1)^{m}  (f\ot 1) \Delta_P \ot \psi_{g} . 
\]
Define $ \psi_{f'\ot^t g'}$ similarly for $f'\ot^t g'$. Using formula~(\ref{eqn:formula-bracket}) for $[f\ot^t g,f'\ot^t g']$, the Gerstenhaber bracket via homotopy liftings in $\HH^{*,F'\oplus G'}(A\ot^t B)$, we have
\begin{align*}
[f\ot^t g,f'\ot^t g'] &= (f\ot^t g) \psi_{f'\ot^t g'} - (-1)^{(m+n-1)(m'+n'-1)} (f'\ot^t g')\psi_{f\ot^t g} \\
& = (f\ot^t g) (\psi_{f'}\ot (1\ot g') \Delta_Q) + (-1)^{m'} (f\ot^t g)
  ((f'\ot 1) \Delta_P \ot \psi_{g'}) \\
& \quad - (-1)^{(m+n-1)(m'+n'-1)} (f'\ot^t g') (\psi_f\ot (1\ot g)
  \Delta_Q ) \\
 & \quad - (-1)^{(m+n-1)(m'+n'-1) +m}
   (f'\ot^t g')((f\ot 1)\Delta_P \ot \psi_g).
\end{align*}
Now $f (f' \otimes 1) \Delta_P = (-1)^{mm'}(f' \otimes f) \Delta_P$ and $g (1 \otimes g') \Delta_Q = (g \otimes g') \Delta_Q$ by~(\ref{eqn:fcn-commutes}), so the above becomes
\begin{align}\label{eqn:Gb-hhAtB}
&(-1)^{(m'-1)n} (f\psi_{f'} \ot^t (g\ot g')\Delta_Q) 
   + (-1)^{m'+m'n+mm'} (f'\ot f)\Delta_P\ot^t g\psi_{g'} \notag\\
& \quad - (-1)^{(m+n-1)(m'+n'-1)+(m-1)n'} f'\psi_f \ot^t (g'\ot g)\Delta_Q \notag\\
   & \quad - (-1)^{(m+n-1)(m'+n'-1) +m +mn' +mm'} (f\ot f')\Delta_P\ot^t g'\psi_g .
\end{align}
Now $ (f' \otimes f) \Delta_P \sim (-1)^{mm'} (f \otimes f') \Delta_P$ and $ (g' \otimes g) \Delta_Q \sim (-1)^{nn'} (g \otimes g') \Delta_Q$ because the cup product is graded commutative: Since $f' \smile f $ and $ (-1)^{mm'}  f \smile f'$ differ by a coboundary we have
\[
(f' \ot f)\Delta_P = f' \smile f \sim (-1)^{|f||f'|}  f \smile f' =   (-1)^{mm'} (f \ot f')\Delta_P ,
\]
and similarly for $(g \otimes g') \Delta_Q$. So what we obtained in \eqref{eqn:Gb-hhAtB} differs by a coboundary from
\begin{align*}
&(-1)^{(m'-1)n } f\psi_{f'} \ot^t (g\ot g')\Delta_Q
   - (-1)^{(m'-1)(m+n-1)} f'\psi_f \ot^t (g\ot g')\Delta_Q \\
& \quad + (-1)^{m'(n-1)} (f\ot f') \Delta_P \ot^t g\psi_{g'} 
  - (-1)^{(n-1)(m'+n'-1)} (f\ot f')\Delta_P  \ot^t g'\psi_g \\
& =   (-1)^{(m'-1)n} (f\psi_{f'} - (-1)^{(m-1)(m'-1)} f'\psi_f )\ot^t (g\ot g')\Delta_Q \\
& \quad +(-1)^{m'(n-1)} (f\ot f')  \Delta_P\ot^t (g\psi_{g'} - (-1)^{(n-1)(n'-1)} g'\psi_g) .
\end{align*}
This is equal to the expression~(\ref{eqn:Ga}) found before. We thus conclude that the two bracket expressions agree  in cohomology.
\end{proof}

Considering now the case without the bigrading, the setup is completely analogous with two exceptions:
First, instead of defining $\Delta_{P\ot^t Q}$ we define $\tilde{\Delta}_{P\ot Q} := (1\ot \tau\ot 1)\tilde{\Delta}_P \ot \tilde{\Delta}_Q$, where $\tau$ is the graded flip map (i.e.~$\tau(x\ot y) = (-1)^{|x||y|} y\ot x$ for all homogeneous $x\in P$ and $y\in Q$). Then $\tilde{\Delta}_{P\ot Q}$ is a chain map lifting the multiplication map $(A\ot B)\ot (A\ot B)\rightarrow A\ot B$ on the tensor product algebra $A\ot B$. Let $\Delta_{P\ot Q}$ be the induced diagonal map from $P\ot Q$ to $(P\ot Q)\ot_{A\ot B}(P\ot Q)$, and second, we let $f\in\HHom_{A^e}(P_m,A)$, $f'\in\HHom_{A^e}(P_{m'},A)$, $g\in\HHom_{B^e}(Q_n,B)$, $g'\in\HHom_{B^e}(Q_{n'}, B)$ be cocycles representing elements of $H_1:= \HH^*(A)$ and $H_2:= \HH^*(B)$.

\begin{lemma}\label{lem:fotg-hl}
Let 
\[
   \psi_{f\ot g}:= \psi_f \ot (1\ot g)\Delta_Q + (-1)^m (f\ot 1)
    \Delta_P \ot \psi_g . 
\]
Then $\psi_{f\ot g}$ is a homotopy lifting of $f\ot g$
with respect to $\Delta_{P\ot Q}$.
\end{lemma}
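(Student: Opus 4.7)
The plan is to closely parallel the proof of Lemma~\ref{lem:twist-fotg-hl}, since the untwisted case amounts to the same computation with every twisting factor $t^{\langle\cdot|\cdot\rangle}$ replaced by $1$ and the isomorphism $\sigma^{-1}$ replaced by the graded flip $1\ot\tau\ot 1$ appearing inside $\tilde\Delta_{P\ot Q}$. I would first observe that $\psi_{f\ot g}$ is visibly an $(A\ot B)^e$-module homomorphism from $P\ot Q$ to $P\ot Q[1-m-n]$, and then verify the two conditions of Definition~\ref{defi:bracket} in turn.

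For the first condition, I want to show
\[
d\psi_{f\ot g} - (-1)^{m+n-1}\psi_{f\ot g}d = \bigl((f\ot g)\ot 1_{P\ot Q} - 1_{P\ot Q}\ot (f\ot g)\bigr)\Delta_{P\ot Q}.
\]
The left-hand side is expanded by the Leibniz rule exactly as in the twisted proof: the maps $(f\ot 1)\Delta_P$ and $(1\ot g)\Delta_Q$ commute with $d$ up to the signs $(-1)^m$ and $(-1)^n$, so four of the eight resulting terms cancel, and applying~(\ref{eqn:hl}) to $f$ and to $g$ leaves
\[
(f\ot 1 - 1\ot f)\Delta_P\ot (1\ot g)\Delta_Q + (f\ot 1)\Delta_P\ot (g\ot 1 - 1\ot g)\Delta_Q.
\]
The right-hand side is expanded by applying $\tilde\Delta_{P\ot Q}=(1\ot\tau\ot 1)(\tilde\Delta_P\ot\tilde\Delta_Q)$ to a summand $(x\ot x')\ot (y\ot y')$ with $|x|=|x'|=m$ and $|y|=|y'|=n$; after carefully tracking the Koszul sign $(-1)^{|x'||y|}$ contributed by $\tau$, and projecting onto $P\ot Q$ via the module action, one arrives at $f(x)x'\ot g(y)y' - (-1)^{n|y|+m|x|}xf(x')\ot yg(y')$, matching the expansion of the left-hand side summand by summand.

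For the second condition, I would define $\psi_{P\ot Q}:=\psi_P\ot(\mu_Q\ot 1)\Delta_Q + (1\ot\mu_P)\Delta_P\ot\psi_Q$ and verify by an analogous Leibniz computation (with $|\mu_P|=|\mu_Q|=0$) that $d\psi_{P\ot Q}+\psi_{P\ot Q}d = (\mu_{P\ot Q}\ot 1_{P\ot Q}-1_{P\ot Q}\ot\mu_{P\ot Q})\Delta_{P\ot Q}$, and then establish $\mu_{P\ot Q}\psi_{f\ot g}\sim (-1)^{m+n-1}(f\ot g)\psi_{P\ot Q}$ using the hypotheses $\mu_P\psi_f\sim (-1)^{m-1}f\psi_P$, $\mu_Q\psi_g\sim (-1)^{n-1}g\psi_Q$ and the standard identities $\mu_Q(1\ot g)\Delta_Q=g(1\ot\mu_Q)\Delta_Q$ and $\mu_P(f\ot 1)\Delta_P=f(\mu_P\ot 1)\Delta_P$.

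The main obstacle is bookkeeping the Koszul signs produced by $\tau$ inside $\tilde\Delta_{P\ot Q}$, which here play the role that the twisting factors from $\sigma^{-1}$ played in the proof of Lemma~\ref{lem:twist-fotg-hl}. Once these signs are shown to match on homogeneous elements of the correct degrees, the rest of the verification is formal and follows the twisted argument verbatim.
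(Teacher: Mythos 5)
Your proposal is correct and matches the paper's proof, which simply observes that the argument for Lemma~\ref{lem:twist-fotg-hl} goes through verbatim with $F=G=1$, $t=1$, and $\tau(\Delta_P\ot\Delta_Q)$ playing the role of $\sigma^{-1}(\Delta_P\ot^t\Delta_Q)$; you have just written out the specialization in more detail. The sign bookkeeping from $\tau$ that you flag is exactly the $(-1)^{|x'||y|}=(-1)^{ju}$ already present in $\sigma^{-1}$, so nothing new needs to be checked.
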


\begin{proof}
Note that in the proof of Lemma~\ref{lem:twist-fotg-hl} the bicharacter does not appear at the end of the computations, and it also would not appear in any of the canonical projections we use. Thus that proof holds taking $F=1,G=1,t=1$, 
since $\tau(\Delta_P\ot \Delta_Q)$ behaves exactly like $\sigma^{-1}(\Delta_P\ot^t \Delta_Q)$ with the only difference that in the former the bicharacter does not appear.
\end{proof}

\begin{theo}[Le--Zhou~\cite{LZ}]\label{thm:G-alg-iso}
Let $A$ and $B$ be algebras over the field $k$. Assume that there exist projective resolutions of $A$ as an $A^e$-module and of $B$ as a $B^e$-module consisting of finitely generated modules or that at least one of $A$ or $B$
is finite dimensional as a vector space over $k$.
Then there is an isomorphism of Gerstenhaber algebras
\[
   \HH^*(A\ot B) \cong \HH^*(A)\ot \HH^*(B) ,
\]
where the algebra on the right side is a graded tensor product 
with product and bracket given by formulas~(\ref{eqn:cup-Galg})
and~(\ref{eqn:bracket-Galg}).
\end{theo}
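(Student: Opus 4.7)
The plan is to imitate the proof of Theorem~\ref{thm:tG-alg-iso} with trivial grading groups and trivial twisting $t=1$, exploiting Lemma~\ref{lem:fotg-hl} in place of Lemma~\ref{lem:twist-fotg-hl}. First, the isomorphism of associative algebras $\HH^*(A\ot B)\cong \HH^*(A)\ot \HH^*(B)$ is classical: the finiteness hypothesis gives $\HHom_{(A\ot B)^e}(P_m\ot Q_n,A\ot B)\cong \HHom_{A^e}(P_m,A)\ot \HHom_{B^e}(Q_n,B)$ for each $m,n$, which assembles into an isomorphism of complexes, and hence on cohomology. Under this identification we send a cocycle pair $(f,g)$ with $f\in\HHom_{A^e}(P_m,A)$ and $g\in\HHom_{B^e}(Q_n,B)$ to $f\ot g\in\HHom_{(A\ot B)^e}(P_m\ot Q_n,A\ot B)$ defined by $(f\ot g)(x\ot y)=(-1)^{mn}f(x)\ot g(y)$, and the cup product on the tensor side is, up to coboundary, given by formula~(\ref{eqn:cup-Galg}).

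Next, the crux is to check that under this identification, the Gerstenhaber bracket on $\HH^*(A\ot B)$ coincides with the bracket on the graded tensor product defined by formula~(\ref{eqn:bracket-Galg}). I would pick cocycles $f,f',g,g'$ of degrees $m,m',n,n'$ as in the setup before Lemma~\ref{lem:fotg-hl}, and start with formula~(\ref{eqn:bracket-Galg}), expanding
\[
[f\ot g,f'\ot g']=(-1)^{(m'-1)n}[f,f']\ot(g\smile g')+(-1)^{m'(n-1)}(f\smile f')\ot[g,g'],
\]
then write $[f,f']$ and $[g,g']$ via Volkov's formula~(\ref{eqn:formula-bracket}) using homotopy liftings $\psi_f,\psi_{f'},\psi_g,\psi_{g'}$ with respect to $\Delta_P$ and $\Delta_Q$, and write the cup products as $(f'\ot f)\Delta_P$, $(g'\ot g)\Delta_Q$.

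In parallel, I would use Lemma~\ref{lem:fotg-hl} to substitute
\[
\psi_{f\ot g}=\psi_f\ot(1\ot g)\Delta_Q+(-1)^m(f\ot 1)\Delta_P\ot\psi_g
\]
(and similarly for $\psi_{f'\ot g'}$) into Volkov's bracket formula~(\ref{eqn:formula-bracket}) at the level of $\HH^*(A\ot B)$:
\[
[f\ot g,f'\ot g']=(f\ot g)\psi_{f'\ot g'}-(-1)^{(m+n-1)(m'+n'-1)}(f'\ot g')\psi_{f\ot g}.
\]
Expanding using~(\ref{eqn:fcn-commutes}), in particular $f(f'\ot 1)\Delta_P=(-1)^{mm'}(f'\ot f)\Delta_P$ and $g(1\ot g')\Delta_Q=(g\ot g')\Delta_Q$, produces four terms indexed by the same four quantities $f\psi_{f'}$, $f'\psi_f$, $g\psi_{g'}$, $g'\psi_g$ that appear on the right-hand side of the previous paragraph. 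The remaining task is to match signs: after replacing $(f'\ot f)\Delta_P$ and $(g'\ot g)\Delta_Q$ by their graded-commuted versions $(-1)^{mm'}(f\ot f')\Delta_P$ and $(-1)^{nn'}(g\ot g')\Delta_Q$ (valid up to coboundary by graded commutativity of the cup product on cohomology), the resulting expression equals the one from the graded tensor product definition.

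The main obstacle is purely bookkeeping of signs, and this has already been handled in the proof of Theorem~\ref{thm:tG-alg-iso}; indeed, as noted in the proof of Lemma~\ref{lem:fotg-hl}, the twisted proof specializes verbatim when $F,G,t$ are trivial because the bicharacter never appears at the crucial step, and the flip $\tau$ in $\tilde\Delta_{P\ot Q}=(1\ot\tau\ot 1)\tilde\Delta_P\ot\tilde\Delta_Q$ plays exactly the role of $\widetilde\sigma^{-1}$. Thus the computations in the proof of Theorem~\ref{thm:tG-alg-iso}, from~(\ref{eqn:Ga}) through~(\ref{eqn:Gb-hhAtB}) and the subsequent graded-commutativity step, carry over with each $\ot^t$ replaced by $\ot$, establishing the desired isomorphism of Gerstenhaber algebras.
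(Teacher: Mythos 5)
Your proposal is correct and follows the same route as the paper: the paper's proof of Theorem~\ref{thm:G-alg-iso} is precisely to invoke the proof of Theorem~\ref{thm:tG-alg-iso} with $F=F'=1$, $G=G'=1$, $t$ trivial, and the homotopy lifting of Lemma~\ref{lem:fotg-hl} in place of that of Lemma~\ref{lem:twist-fotg-hl}. Your more detailed walk-through of the sign bookkeeping and the role of the flip $\tau$ versus $\widetilde{\sigma}^{-1}$ matches the computations carried out there.
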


\begin{proof}
Note that the proof of Theorem~\ref{thm:tG-alg-iso} holds, taking $F = F'= 1$, $G = G'= 1$, $t$ the trivial bicharacter, and the homotopy lifting of Lemma~\ref{lem:fotg-hl}.
\end{proof}

We want to remark that, although we can see Lemma~\ref{lem:fotg-hl} and Theorem~\ref{thm:G-alg-iso} as special cases of Lemma~\ref{lem:twist-fotg-hl} and Theorem~\ref{thm:tG-alg-iso}, we need not do so. We want to emphasize that the formal expressions of the homotopy liftings of the two lemmas are identical, as well as the formal expression (\ref{eqn:bracket-Galg})
of the Gerstenhaber bracket to which each theorem refers. We also want to emphasize that the formal computations that needed to be carried out to prove both lemmas and both theorems are identical.

Our results in this paper 
not only showcase the utility of Volkov's homotopy lifting techniques on a theoretical level, but also suggest that finding them in practice can be a manageable task, 
as we show in the next section.

\section{Example}

We illustrate our results by finding  homotopy liftings for some Hochschild
cocycles in a small example:
the truncated polynomial ring $k[x,y]/(x^2,y^2)$ where $k$ is a field of
characteristic~0. We then compute a Gerstenhaber bracket (cf.~\cite[\S5.2]{GNW}). 

Let $A=k[x]/(x^2)$, $B=k[y]/(y^2)$. Let $P$ be the following projective resolution of $A$:
\begin{equation}\label{p-res}
P: \qquad\cdots \xrightarrow{ \cdot v } A\ot A \xrightarrow{ \cdot u  }A\ot A \xrightarrow{ \cdot v }A\ot A \xrightarrow{ \cdot u }A\ot A\;(\;\xrightarrow{\mu_P} A)
\end{equation}
where $u = x\ot 1 - 1\ot x$, $v = x\ot 1+1\ot x$ and the augmentation map $\mu_P$ is multiplication. For each $i$, let $e_i$ denote the element $1\ot 1$ in $A\ot A$ in degree $i$ and set $e_i=0$ whenever $i<0$. Analogously, define $Q$ to be the projective resolution of $B$ defined in a similar way to $P$ above with $u' = y\ot 1 - 1\ot y$, $v' = y\ot 1+1\ot y$, and free basis element of $Q_i:= B\ot B$ denoted $e_i'$ for each $i$. 
A diagonal map $\Delta_P:P\xrightarrow{} P\ot_A P$ can be defined by
\begin{equation*}
\Delta_P(e_i) = \sum_{j+l=i}  e_j\ot e_l
\end{equation*}
(cf.~\cite[Example 4.7.1]{NVW} where there is a slightly different sign convention). 
We define a diagonal map $\Delta_Q$ for the resolution $Q$ analogously. 

Similar to~\cite[Example 2.2.2]{HCSW} (or see \cite[Example 4.7.1]{NVW}), 
we now consider the Hochschild 1-cocycle $f:P\xrightarrow{}A$ defined by $f(e_1)=x, f(e_i)=0$ for $i\neq 1$ and the Hochschild 2-cocycle $g:Q\xrightarrow{}B$ given by $g(e_2')=y, g(e_i')=0$ for $i\neq 2$. We show that the maps $f_i:P_i\xrightarrow{} P_i$ defined by
$$f_i(e_i) = ie_i$$
and $g_j:Q_j\xrightarrow{}Q_{j-1}$ defined by
$$g_{2j}(e_{2j}') = e_{2j-1}', \quad g_{2j-1}(e_{2j-1}') = 0$$
are homotopy lifting maps of $f$ and $g$ respectively (cf.~\cite[Example 4.7.1]{NVW}). Since these algebras are Koszul, we will only need to show that the first part of Equation \eqref{eqn:hl} holds. When we consider $f$, the right hand side of \eqref{eqn:hl} applied to $e_i$ is
\begin{align*}
(f\ot1 - 1\ot f)\Delta_P(e_i) &= (f\ot1 - 1\ot f) \sum_{j+l=i} e_j\ot e_l\\
&=(f\ot 1)(e_1\ot e_{i-1}) - (1\ot f)(e_{i-1}\ot e_{1})\\ &= xe_{i-1}+ (-1)^{i} e_{i-1}x ,
\end{align*}
and the left hand side of Equation \eqref{eqn:hl} applied to $e_i$ is
\begin{align*}
(uf_i - f_{i-1}u)(e_i) &= u(ie_i) - f_{i-1}(xe_{i-1}-e_{i-1}x)\\
&=i(xe_{i-1}-e_{i-1}x) - x(i-1)e_{i-1}+(i-1)e_{i-1}x\\ & = xe_{i-1}-e_{i-1}x
\end{align*}
whenever $i$ is odd and 
\begin{align*}
(vf_i - f_{i-1}v)(e_i) &= v(ie_i) - f_{i-1}(xe_{i-1}+e_{i-1}x)\\
&=i(xe_{i-1}+e_{i-1}x) - x(i-1)e_{i-1}-(i-1)e_{i-1}x\\ & = xe_{i-1}+e_{i-1}x
\end{align*}
  whenever $i$ is even. So we see that Equation \eqref{eqn:hl} holds for $f$.  In a similar fashion, when we consider $g$, the right hand side of \eqref{eqn:hl} applied to $e_{2i}'$ is
\begin{align*}
(g\ot1 - 1\ot g)\Delta_Q(e_{2i}') &= (g\ot1 - 1\ot g) \sum_{j+l=2i} e_j'\ot e_l'\\
&=(g\ot 1)(e_2'\ot e_{2i-2}') - (1\ot g)(e_{2i-2}'\ot e_{2}')\\
& = ye_{2i-2}'-  e_{2i-2}'y ,
\end{align*}
and the left hand side of Equation \eqref{eqn:hl} applied to $e_{2i}'$ is
\begin{align*}
&(u'g_{2i} + g_{2i-1}v')(e_{2i}') = u'(e_{2i-1}') + g_{2i-1}(ye_{2i-1}'+e_{2i-1}'y) =ye_{2i-2}'-e_{2i-2}'y.
\end{align*}
Again when we consider $g$, the right hand side of \eqref{eqn:hl}
applied to $e_{2i-1}'$ is
\begin{align*}
&(g\ot1 - 1\ot g)\Delta_Q(e'_{2i-1}) = (g\ot1 - 1\ot g) \sum_{j+l=2i-1}  e_j'\ot e_l'\\
&=(g\ot 1)(e_2'\ot e_{2i-3}') - (1\ot g)(e_{2i-3}'\ot e_{2}') = ye_{2i-3}'-  e_{2i-3}'y
\end{align*}
and the left hand side of Equation \eqref{eqn:hl} applied to $e_{2i-1}'$ is
\begin{align*}
&(v'g_{2i-1} + g_{2i-2}u')(e_{2i-1}') = v'(0) + g_{2i-2}(ye_{2i-2}'-e_{2i-2}'y) =ye_{2i-3}'-e_{2i-3}'y.
\end{align*}
So Equation \eqref{eqn:hl} holds for $g$. 
Thus the maps $f_i$ and $g_i$ defined above constitute homotopy lifting maps
$\psi_f$ and $\psi_g$ for $f$ and $g$, respectively. 

The truncated polynomial ring $k[x,y]/(x^2,y^2)$ is isomorphic to the 
tensor product $A\ot B$ of the algebras $A$ and $B$. 
Let $P\ot Q\xrightarrow{} A\ot B$ be the tensor product resolution. 
Let $f\ot g$ be a representative of the class in $\HH^{*}(A\ot B)$ 
corresponding to the tensor product of $f$ and $g$.

Lemma \ref{lem:fotg-hl} gives an expression for a homotopy lifting $\psi_{f\ot g}: P\ot Q\xrightarrow{} P\ot Q[-2]$. For example, applied to $e_1\ot e_2'$, 
\begin{align*}
\psi_{f\ot g}(e_1\ot e_2') &= (f_{*}\ot(1\ot g)\Delta_Q - (f\ot 1)\Delta_P\ot g_{*})(e_1\ot e_2')\\
&= f_{1}(e_1)\ot(1\ot g)(e_0'\ot e_2' + e_1'\ot e_1' + e_2'\ot e_0') + (f\ot 1)(e_0\ot e_1 + e_1\ot e_0)\ot g_{2}(e_2')\\
&= e_1\ot(1\ot g)( e_0'\ot e_2') + (f\ot 1)(e_1\ot e_{0})\ot e_{1}'\\
&= e_1\ot e_{0}'y +  xe_{0}\ot e_{1}' . 
\end{align*}
Alternatively, it can be verified directly that $\psi_{f\ot g}$ as defined
above is a homotopy lifting, that is it satisfies Equation \eqref{eqn:twist-fotg}. 

Similarly, we may let $h:P\rightarrow A$ be defined by $h(e_2)=1$, 
$h(e_i)=0$ for $i\neq 2$, a Hochschild 2-cocycle \cite[Example 2.2.2]{HCSW}.
We may check that $(h\ot 1 - 1\ot h)\Delta_P(e_i)=0$ for all $i$,
and so we may take the zero function as a homotopy lifting of $h$
with respect to $\Delta_P$. 
Take similarly $f', h': Q\rightarrow B$ defined by
$f'(e_1')=y$, $f'(e_i')=0$ for $i\neq 1$,
and $h'(e_2')=1$, $h'(e_i')=0$ for $i\neq 2$.
The zero function is a homotopy lifting for $h'$, and
$f_i': Q_i\rightarrow Q_i$ defined by $f_i'(e_i')=ie_i'$ constitutes a
homotopy lifting for $f'$.
Similarly to the above calculations, using Lemma \ref{lem:fotg-hl} we find
\begin{eqnarray*}
  \psi_{f\ot f'}(e_2\ot e_3') &=& 2e_2\ot e_2'y - 3 xe_1\ot  e_3' , \\
  \psi_{f\ot f'} (e_3\ot e_2') & = & 3e_3\ot e_1y - xe_2\ot 2e_2' .
\end{eqnarray*}
Thus by formula \eqref{eqn:formula-bracket}, we obtain a nonzero bracket:
\begin{eqnarray*}
   {[} f\ot f' , h\ot h' {]} (e_2\ot e_3') & = &
      (f\ot f') \psi_{h\ot h'} ( e_2\ot e_3') + (h\ot h') \psi_{f\ot f'}(e_2\ot e_3')\\
    &=& (h\ot h')(2e_2\ot e_2'y - 3xe_1\ot e_3') \\
     &=&  2y , \\
  {[}f\ot f', h\ot h' {]} (e_3\ot e_2') & = & (h\ot h')\psi_{f\ot f'} (e_3\ot e_2')\\     
       &=& (h\ot h') ( 3e_3\ot e_1'y - xe_2\ot 2e_2') \\
   & = & -2x .
\end{eqnarray*}
Compare with \cite[\S5.2]{GNW} where Gerstenhaber brackets of all generators
of Hochschild cohomology of $k[x,y]/(x^2,y^2)$ were found with considerably
greater effort. An advantage there however is the additional information
on brackets with elements in degree~0, something not addressed currently in the
homotopy lifting theory.

This small example shows that the homotopy lifting method is not only a
valuable theoretical tool, but also can be used for explicit computations
of Gerstenhaber brackets via formula \eqref{eqn:formula-bracket}.

\end{document}